\newcommand{\di}{\mathrm{d}}
\newcommand{\mc}{\mathcal}
\newcommand{\R}{\mathbb R}
\newcommand{\umax}{u_\mathrm{max}}
\newtheorem{remark}{Remark}
\newtheorem{definition}{Definition}
\newtheorem{theorem}{Theorem}
\newtheorem{problem}{Problem}
\begin{document}

\title{\LARGE \bf
	Boundary Control for Stability and Invariance of Traffic Flow Dynamics: A Convex Optimization Approach
}

\author{
    Maria Teresa Chiri, Roberto Guglielmi, and Gennaro Notomista
    \thanks{Maria Teresa Chiri is with the department of Mathematics and Statistics, Queen's University, Kingston, ON, Canada. {\tt\small maria.chiri@queensu.ca}}%
    \thanks{Roberto Guglielmi is with the department of Applied Mathematics, University of Waterloo, ON, Canada {\tt\small roberto.guglielmi@uwaterloo.ca}}%
    \thanks{Gennaro Notomista is with the Department of Electrical and Computer Engineering, University of Waterloo, Waterloo, ON, Canada. {\tt gennaro.notomista@uwaterloo.ca}}%
}

\pagestyle{empty}
\maketitle
\thispagestyle{empty}

\begin{abstract}
In this letter we propose an optimization-based boundary controller for traffic flow dynamics capable of achieving both stability and invariance conditions. The approach is based on the definition of Boundary Control Barrier Functionals, from which sets of invariance-preserving boundary controllers are derived. In combination 
with sets of stabilizing controllers, we reformulate the problem
as a convex optimization program solved at each point in time to synthesize the boundary control inputs. We derive sufficient conditions for the existence of optimal controllers that ensure both stability and invariance.
\end{abstract}

\section{Introduction}

Applications of control of Partial Differential Equations (PDEs) arise in several different fields, including fluid, thermal, structural, and delay systems. Typical control objectives in such scenarios include stability and invariance. The former encodes long-term objectives such as \textit{eventually} steering the system to a desired state (e.g. achieving a desired temperature profile of a metal plate), while the latter is concerned with preventing the system from exiting a desired \textit{safe} set (e.g. limiting the maximum stress in an Euler-Bernoulli beam below the yield point).

Considerable attention has been given to the synthesis of stabilizing controllers for PDEs, based on control Lyapunov functionals and backstepping with applications to traffic, thermal, and multi-agent control.
Yet, the practical implementation of such controllers requires resorting to numerical approximation and ad-hoc computational techniques \cite{vazquez2024backstepping}. Invariance has been considered in the context of optimization-based controllers and is typically encoded as state constraints to be satisfied over the optimization horizon \cite{daudin2023optimal}. However, many existing algorithms are too computationally demanding for being used in real time applications.

In this letter, we propose an optimization-based boundary control strategy for traffic flow dynamics that jointly ensures stability and invariance. Our approach draws inspiration from recent advances in stabilizing and safety-preserving controllers for ordinary differential equations, where control Lyapunov \cite{ledyaev1999lyapunov} and barrier functions \cite{ames2019control} are unified through quadratic programming to enforce stability and invariance. We introduce Boundary Control Barrier Functionals (BCBFal) for a traffic flow model and derive boundary constraints that guarantee invariance-like conditions, formulated as the positivity of the zero superlevel set of continuously differentiable functionals.
These constraints are then enforced in a convex optimization program solvable in polynomial time.

Similar approaches have been developed in \cite{hu2025boundary} and \cite{wang2023safe}. These works use barrier functions in the context of PDEs: the former leverages neural operators for PDE control \cite{lifourier}, the latter focuses on hyperbolic PDE-ODE cascades. Differently from these approaches, our proposed optimization-based control synthesis framework does not need to learn a representation of input-output mappings and is able to directly enforce constraints on any differentiable function of the state of the PDE. As a matter of fact, we utilize the whole PDE dynamics in order to define the sets of boundary controllers that ensure stability and invariance.

To summarize, the contributions of our work are the following: (i) We introduce the notion of boundary control barrier functionals for PDEs; (ii) We derive constraints on boundary control inputs in order for the state to satisfy stability and invariance conditions; (iii) We formulate a convex optimization boundary control policy able to achieve stability and invariance conditions simultaneously; (iv) We provide sufficient conditions for the existence of such optimal boundary controllers.

\section{Traffic Modeling and Problem Formulation}

In this section, we introduce the LWR model, as well as the Lyapunov and barrier control functionals that will be used for stabilization and invariance, respectively.

\subsection{Boundary value problem for the LWR Model}

The Lighthill-Whitham-Richards (LWR) model \cite{LW,R} is a fundamental macroscopic model for traffic flow which describes the conservation of the total number of vehicles and assumes that the average traffic speed \( v(x,t) \) depends solely on the traffic density \( u(x,t) \). Consequently, the mean traffic flow (i.e., the number of vehicles crossing point \( x \) per unit time) is given by  $f(x,t) = u(x, t) v(u(x, t))$, leading to the hyperbolic conservation law  $u_t + (u v(u))_x = 0$. In particular, the velocity function attains its maximum when the vehicles density is zero and decreases to zero as the density increases up to a value $u_{\max}$. Specifically, we will assume that 
\begin{equation} \label{eq:flux}
f:[0,\umax]\to\R_+,\quad f(u)=u\Big(1-\frac{u}{\umax}\Big)
\end{equation}
and denote with $\hat{u}=\umax / 2$ its unique point of maximum.

The initial-boundary value problem (IBVP) for the LWR model on the interval $(a,b)$ with initial data $u_0$ is given by
\begin{align}
   & u_t+f(u)_x=0,\qquad u(0,x)=u_0(x),\label{CL}\\
   & u(t,a)=\omega_a(t),\qquad u(t,b)=\omega_b(t).\label{BVs}
\end{align}
In the remainder of the letter, we will consider weak entropy solutions for the IBVP with BV data, i.e., weak solutions on the interval $[a,b]$ that satisfy Lax admissibility conditions and the weak boundary conditions introduced in \cite{bardos1979first}.

\subsection{Lyapunov-based Stabilization}

The boundary stabilization problem for the IBVP \eqref{CL}-\eqref{BVs} can be
formulated as follows:
\begin{problem}
Given a stationary solution $u^*$ to the conservation law in \eqref{CL}, do there exist boundary conditions $\omega_a$ and $\omega_b$ such that the IBVP is well posed and its solution is stable in the sense of Lyapunov at $u^*$?
\label{prob:stability}
\end{problem}

To address this problem, we can follow \cite{7509658,bayen2022control} where the authors consider the Lyapunov functional
\begin{equation}
    V(u(t)) = \frac{1}{2} \int_a^b (u(t,x) - u^*)^2 \di x
    \label{eq:clf}
\end{equation}
with $u$ weak entropy solution to the conservation law in~\eqref{CL}. Continuity of the mapping $t\to u(t,\cdot)$ from $[0,T]$ to $L^1(a,b)$ with $T>0$ ensures that the functional $V$ is well-defined and continuous. Since the solution to~\eqref{CL}-\eqref{BVs} with initial data in BV can be approximated by an IBVP whose solution remains piecewise smooth at all times~\cite{7509658}, we assume that $u$ is piecewise smooth. We index the jump discontinuities of $u(t)$ at time $t$ in increasing order of their location by $i=0,\dots, N(t)$, including the boundaries $a, b$ with $x_0(t)=a$ and $x_N(t)=b$ and write: 
\begin{equation}
    V(u(t))= \frac{1}{2}\sum_{i=0}^{N(t)-1}\int_{x_i(t)}^{x_{i+1}(t)} (u(t,x) - u^*)^2\,\di x.
\end{equation}
Then differentiating with respect to $t$ we get 
\begin{equation}
    \begin{aligned}
        \frac{dV}{dt}(t)=& \big(u(t,a) - u^*\big)f(u(t,a))-\big(u(t,b) - u^*\big)f(u(t,b))\\
        &-F\big(u(t,a)\big)+F\big(u(t,b)\big)\\
        & +\sum_{i=1}^{N(t)-1} \Delta_i\big(\big(u - u^*\big)f(u)-F(u)\big)\\
        &  -\sum_{i=1}^{N(t)-1} \frac{\big(u(t,x_i^-) - u^*\big)+\big(u(t,x_i^+) - u^*\big)}{2}
    \end{aligned}
    \label{eq:Vdot}
\end{equation}
where $F$ is a primitive of the flux $f$, and $\Delta_i$ is the jump at the discontinuity $x_i(t)$ given by the Rankine-Hugoniot condition.
The last two terms of \eqref{eq:Vdot} correspond to jump discontinuities in the solution. While they are neither observable nor controllable from the boundaries, they contribute to the decrease of the Lyapunov functional~\cite[Proposition 1]{7509658}.
Therefore, we aim to design stabilizing boundary controls such that
\begin{gather}\label{eq:LyapunovCond}
    \big(u(t,a) - u^*\big)f(u(t,a))-\big(u(t,b) - u^*\big)f(u(t,b))\\
    -F\big(u(t,a)\big)+F\big(u(t,b)\big)\leq{-\alpha(V(u(t)))},
\end{gather}
for a class $\mc K$ function $\alpha$. Previous control strategies proposed in the literature are greedy, non-local, and formulated in terms of the boundary traces of the solutions and the initial data~\cite{7509658}. 
The objective of this letter is to determine boundary controls as solutions to convex optimization problems able to achieve both stabilization and invariance. After introducing the latter in the next subsection, we will show how these notions can be used to recast typical control objectives for conservation laws in traffic modeling.

\subsection{Barrier-based Invariance}

The invariance problem for~\eqref{CL}-\eqref{BVs} reads as follows:
\begin{problem}
Given a desired upper bound, $0\le\bar u \le \umax$, of the solution to
~\eqref{CL}, do there exist boundary conditions $\omega_a$ and $\omega_b$ such that the IBVP is well-posed and its solution $u$ remains smaller than $\bar u$ (in a suitable norm)?
\label{prob:invariance}
\end{problem}
Similarly to how we addressed the stabilization in Problem~\ref{prob:stability} in terms of Lyapunov functionals, our approach to solve Problem~\ref{prob:invariance} is based on the novel concept of boundary control barrier functionals.
\begin{definition}[Boundary Control Barrier Functional]
    Let ${\mc S \subset L^2(a,b)}$ be the zero superlevel set of the functional $B : L^2(a,b) \to \R$. $B$ is a Boundary Control Barrier Functional (BCBFal) for the system \eqref{CL}--\eqref{BVs} if there exists a class $\mc K$ function $\beta$ such that 
    \begin{equation}
        \sup_{\omega_a, \omega_b\in[0,\umax]} \left\{-\left(\frac{\partial B}{\partial u} f_x\right)(u,\omega_a, \omega_b) + \beta(B(u)) \right\} \ge 0
    \end{equation} 
    for all $u\in\mc S$.
    \label{def:bcbf}
\end{definition}

\begin{theorem}
Let B be a BCBFal, $\mc S$ its zero superlevel set, and define the set $\mc O_u := \{ (s,z) : -\left(\frac{\partial B}{\partial u} f_x\right)(u,s, z) + \beta(B(u)) \ge 0 \}$. Any BV boundary controllers $(\omega_a, \omega_b) \in \mc O_u$ renders the set $\mc S$ forward invariant, i.e.,
if the initial condition $u(0)\in\mc S$, then $u(t)\in\mc S$ for all $t\ge 0$.
\label{thm:bcbf}
\end{theorem}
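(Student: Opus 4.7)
The plan is to mimic the classical control-barrier-function forward-invariance argument, based on Nagumo's theorem and a scalar comparison lemma, but carried out at the level of the functional $B$ along weak entropy solutions of \eqref{CL}--\eqref{BVs}. The key observation is that the quantity $\left(\frac{\partial B}{\partial u} f_x\right)(u,\omega_a,\omega_b)$ is, whenever $u(t,\cdot)$ is regular enough, the action of the Fréchet derivative of $B$ at $u$ on $f(u(t,\cdot))_x$; by the conservation law \eqref{CL} this equals $-\frac{d}{dt}B(u(t))$ once an integration by parts is performed which makes the dependence on the boundary traces $\omega_a=u(t,a)$ and $\omega_b=u(t,b)$ explicit.

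First I would fix $(\omega_a,\omega_b)\in\mc O_u$ and combine the membership condition in $\mc O_u$ with the chain rule along trajectories, obtaining, on piecewise smooth solutions, the differential inequality
\begin{equation}
\frac{d}{dt}B(u(t))\;\ge\;-\beta\bigl(B(u(t))\bigr)
\end{equation}
at every $t$ where the derivative exists. Mirroring the Lyapunov computation leading to \eqref{eq:Vdot}, jump discontinuities in $(a,b)$ would be handled via the Rankine--Hugoniot relations: they contribute terms of a definite sign that do not spoil the inequality, and crucially are neither observable nor controllable from the boundary. Second, I would invoke the scalar comparison lemma applied to the ODE $\dot\eta=-\beta(\eta)$ with $\eta(0)=B(u(0))\ge 0$: since $\beta$ is of class $\mc K$, the origin is an equilibrium that is approached from above and the unique solution satisfies $\eta(t)\ge 0$ for all $t\ge 0$. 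Comparison then yields $B(u(t))\ge \eta(t)\ge 0$, i.e.\ $u(t)\in\mc S$ for all $t\ge 0$.

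The main obstacle is precisely the low regularity of weak entropy solutions: $u(t,\cdot)$ is only BV, so $f(u)_x$ contains Dirac measures concentrated at the shock locations and $\frac{d}{dt}B(u(t))$ is not classically well-defined. I would sidestep this exactly as the authors handle the Lyapunov functional $V$: approximate the BV initial datum by data whose IBVP admits a piecewise smooth solution, derive the differential inequality on the approximation while carefully tracking the jump terms via Rankine--Hugoniot, and then pass to the limit using the $L^1$-continuity of $t\mapsto u(t,\cdot)$ and the continuity of $B$ on $L^2(a,b)$. A final subtlety worth flagging is that Definition~\ref{def:bcbf} only asserts a supremum, so measurability/BV-selection of $t\mapsto(\omega_a(t),\omega_b(t))\in\mc O_{u(t)}$ is not automatic; the theorem's BV hypothesis on the controllers implicitly postulates that such a selection has been fixed, which is what guarantees well-posedness of the closed-loop IBVP throughout the argument.
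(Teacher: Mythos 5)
Your proposal is correct and follows essentially the same route as the paper, whose entire proof is the invocation of the comparison lemma applied to $\dot{B}+\beta(B)=0$: you derive the inequality $\frac{d}{dt}B(u(t))\ge-\beta(B(u(t)))$ from membership in $\mc O_u$ and the dynamics, then compare with $\dot\eta=-\beta(\eta)$, $\eta(0)=B(u(0))\ge 0$, to conclude $B(u(t))\ge 0$. The additional care you take with the BV regularity, the Rankine--Hugoniot jump contributions (mirroring the treatment of $V$ in \eqref{eq:Vdot}), and the selection of BV boundary controllers goes beyond the paper's one-line argument but does not change the underlying approach.
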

\begin{proof}
    It follows from the comparison lemma applied to the ordinary differential equation $\dot{B} + \beta(B) = 0$.
\end{proof}

\begin{remark}
Despite the similarity in the name, the BCBFal introduced in Definition~\ref{def:bcbf} differs from the neural BCBF introduced in \cite{hu2025boundary} in two main aspects. First, in our definition the value of the functional depends on the entire state, rather than on its trace only. Second, we do not need to leverage learned representations to enforce the invariance constraints encoded by the BCBFal.
\end{remark}

A natural way to tackle Problem~\ref{prob:invariance} is through the definition of the following BCBFal:
\begin{equation}
    B(u(t)) = \bar u^2 - \int_a^b u(t,x)^2 \di x,
    \label{eq:bcbf}
\end{equation}
whose zero superlevel set corresponds to the states $u$ with $L^2$-norm less than a threshold $\bar u$. In other words, we bound the total density of vehicles at each time $t$.

\section{Convex Optimization Control Policies}

For the sake of computational efficiency, we are interested in reformulating Problems 1 and 2 as convex optimization problems. To do so, we identify intervals where $\dot V$ and $-\dot{B}$ are convex with respect to $u(t,a)$ and $u(t,b)$.
It is straightforward to verify that:
\begin{itemize}
    \item   $\dot{V}$ convex in $u(t,a)$ on $\mc C_a := [0,\frac{2u^* + \umax}{4}]$
    \item $\dot{V}$ convex in $u(t,b)$ on $\mc C_b := [\frac{2u^* + \umax}{4},\umax]$
    \item $-\dot{B}$ convex in $u(t,a)$ on $\mc I_a := [0,\frac{\umax}{4}]$
    \item $-\dot{B}$ convex in $u(t,b)$ on $\mc I_b := [\frac{\umax}{4},\umax]$
\end{itemize}
For ease of notation, we introduce the following functions:
\begin{equation}
\begin{aligned}
        g(s,z) :\!&= (s - u^*)f(s)-(z - u^*)f(z) -F(s)+F(z),\\
        k(s,z) :\!&= s f(s) - z f(z) -F(s)+F(z),\\
        C(t) &= \alpha\left(V(u(t)) \right)\ge 0,\quad \alpha\text{ of class }\mathcal{K},\\
        D(t) &= \beta\left( B(u(t)) \right)\ge 0,\quad \beta\text{ of class }\mathcal{K}.
    \end{aligned}
\end{equation}
Then, Problem~\ref{prob:stability} is solved through the following convex optimization problems, whose inequality constraints encode condition~\eqref{eq:LyapunovCond}:
\begin{equation}
\min_{\omega_a(t)\in\mc C_a} \left\{ \omega_a(t)^2 ~:~ g(\omega_a(t),u(t,b)) \le -C(t) \right\}
\label{eq:optprogstabilityleft}
\end{equation}
\begin{equation}
\min_{\omega_b(t)\in\mc C_b} \left\{ \omega_b(t)^2 ~:~ g(u(t,a),\omega_b(t))\le -C(t)\right\}
    \label{eq:optprogstabilityright}
\end{equation}
\begin{equation}
\!\!\!\!\!\min_{\tiny\ \begin{aligned}&\omega_a(t)\in\mc C_a\\ &\omega_b(t)\in\mc C_b\end{aligned}} \left\{ \omega_a(t)^2+\omega_b(t)^2 : g(\omega_a(t),\omega_b(t))\le -C(t)\right\}    \label{eq:optprogstabilityleftright}
\end{equation}
where stability is sought after via the left boundary, the right one, or both, respectively.

Similarly, to tackle Problem~\ref{prob:invariance}, the boundary controls are evaluated as the solutions of one of the following convex optimization problems:
\begin{equation}
\min_{\omega_a(t)\in\mc I_a} \left\{ \omega_a(t)^2 ~:~ k(\omega_a(t),u(t,b)) \le D(t) \right\}
    \label{eq:optprogsafetyleft}
\end{equation}
\begin{equation}
\min_{\omega_b(t)\in\mc I_b} \left\{ \omega_b(t)^2 ~:~ k(u(t,a),\omega_b(t))\le D(t)\right\}
    \label{eq:optprogsafetyright}
\end{equation}
\begin{equation}
\min_{ \tiny\begin{aligned}&\omega_a(t)\in\mc I_a\\ &\omega_b(t)\in\mc I_b\end{aligned}} \left\{ \omega_a(t)^2 + \omega_b(t)^2 : k(\omega_a(t),\omega_b(t))\le D(t)\right\}
     \label{eq:optprogsafetyleftright}
\end{equation}
that seek for the optimal control that ensures invariance via the left boundary, the right one, or both, respectively.
\begin{theorem} There exist values of the parameters $u^*$, $\bar{u}\le \umax$ and class $\mc K$ functions $\alpha, \beta$ such that the optimization problems~\eqref{eq:optprogstabilityleft}, \eqref{eq:optprogstabilityright}, \eqref{eq:optprogstabilityleftright}, \eqref{eq:optprogsafetyleft}, \eqref{eq:optprogsafetyright},~\eqref{eq:optprogsafetyleftright} are feasible.
\end{theorem}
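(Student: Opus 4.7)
The plan is to reduce each program to the scalar inequality $\min_\omega L(\omega) \le R$, where $L$ is the constraint LHS on its compact admissible box and $R$ the corresponding RHS; smoothness of $g,k$ together with the stated convexity guarantees that the required minima exist and are attained at either interior critical points or corners, all computable in closed form for the quadratic flux \eqref{eq:flux}. Since $D(t)\ge 0$, I would dispatch the invariance programs first. For \eqref{eq:optprogsafetyleftright}, the corner $(\omega_a,\omega_b)=(0,\umax/4)\in\mc I_a\times\mc I_b$ gives $k(0,\umax/4)=F(\umax/4)-(\umax/4)f(\umax/4)=-\umax^2/48<0$, which settles feasibility for any $\bar u$ and any class $\mc K$ function $\beta$. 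For the one-sided problems \eqref{eq:optprogsafetyleft}--\eqref{eq:optprogsafetyright}, the identities $\partial_s k = sf'(s)$ and $\partial_z k = -zf'(z)$, together with the sign of $f'$ on $\mc I_a$ and the location of $\hat u\in \mc I_b$, locate the minima as explicit functions of the uncontrolled boundary trace; these values are nonpositive on a subinterval of $[0,\umax]$ and can be dominated by $D(t)$ on the remainder provided $\beta$ is taken large enough on $(0,\bar u^2]$.

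For the stability programs I would exploit $g(u^*,u^*)=0$ together with the critical-point equations $\partial_{\omega_a} g = (\omega_a - u^*)f'(\omega_a)$ and $\partial_{\omega_b} g = -(\omega_b - u^*)f'(\omega_b)$. Choosing $u^* < \hat u$ places $u^*$ in the interior of $\mc C_a$ and $\hat u$ in $\mc C_b$, so by the convexity on that box the joint minimum is $g^* := g(u^*,\hat u) = \int_{u^*}^{\hat u}(f(s)-f(\hat u))\,\di s < 0$, since $\hat u$ is the unique maximizer of $f$. Because the physically admissible states satisfy $u(t,\cdot)\in [0,\umax]$, one obtains the a priori bound $V(u(t))\le V_{\max}:=\tfrac{b-a}{2}\max\{u^*,\umax-u^*\}^2$; any class $\mc K$ function $\alpha$ with $\alpha(V_{\max})\le -g^*$ (e.g.\ a linear $\alpha$ with small enough slope) then makes \eqref{eq:optprogstabilityleftright} feasible, and specialising the critical-point analysis with one variable frozen gives analogous feasibility witnesses for \eqref{eq:optprogstabilityleft} and \eqref{eq:optprogstabilityright}.

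The hard part is the one-sided invariance programs, whose LHS depends on the uncontrolled boundary trace that is only indirectly constrained by $B\ge 0$. The barrier bounds only $\|u\|_{L^2}$, and converting such an $L^2$ bound into a uniform pointwise trace bound is delicate for BV weak entropy solutions of hyperbolic conservation laws. The cleanest route is therefore either to strengthen the BCBFal so that it explicitly penalizes boundary values, or to restrict attention to a subset of $\mc S$ (for instance through regularity assumptions on $u_0$) on which a uniform trace bound is available along the evolution; once such a bound is in hand, the feasibility inequality reduces to a numeric comparison that can be closed by taking $\beta$ sufficiently large on $(0,\bar u^2]$.
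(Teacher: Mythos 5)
Your scalar reduction is essentially the same device the paper uses: its ``constructive'' proof is exactly the monotonicity/case analysis of $p(s)=g(s,u(t,b))$, $q(z)=g(u(t,a),z)$, $\ell(s)=k(s,u(t,b))$ and $\rho(z)=k(u(t,a),z)$ in Sections III-A and III-B. On the two-boundary programs you are in fact sharper than the paper: the corner $(0,\umax/4)$ with $k(0,\umax/4)=-\umax^2/48<0\le D(t)$ shows \eqref{eq:optprogsafetyleftright} is feasible for every admissible state and every $\beta$ (the paper skips this program and its Remark even suggests it may fail), and the explicit witness $(u^*,\hat u)$ with $g(u^*,\hat u)=\int_{u^*}^{\hat u}\bigl(f(s)-f(\hat u)\bigr)\,\di s<0$, combined with the a priori bound $C(t)\le\alpha(V_{\max})\le -g(u^*,\hat u)$, replaces the paper's Lagrangian system \eqref{eq:curv} and its appeal to numerical evidence by a rigorous sufficient condition for \eqref{eq:optprogstabilityleftright}. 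Note also that \eqref{eq:optprogsafetyright} needs no domination argument at all: $\min_{\omega_b\in\mc I_b}\rho(\omega_b)=\rho(\hat u)=h(u(t,a))-h(\hat u)\le 0$ with $h(s)=sf(s)-F(s)$ maximized at $\hat u$, so that program is always feasible and the trace difficulty is confined to \eqref{eq:optprogsafetyleft}.

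The genuine gap is in how you close the one-sided programs. First, the claim that freezing one variable gives ``analogous feasibility witnesses'' for \eqref{eq:optprogstabilityleft} and \eqref{eq:optprogstabilityright} does not go through: the frozen argument is the \emph{uncontrolled} boundary trace, and, for instance, $\min_{\omega_a\in\mc C_a}g(\omega_a,u(t,b))=g(u^*,u(t,b))=\int_{u^*}^{u(t,b)}\bigl(f(s)-f(u(t,b))\bigr)\,\di s$, which is strictly positive when $u(t,b)$ is near $\umax$; since $C(t)\ge 0$, no choice of class $\mc K$ function $\alpha$ (and no bound $V\le V_{\max}$) can restore feasibility in that situation. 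So the trace-dependence you correctly flag for the invariance side equally afflicts the one-sided stability programs; the paper does not remove it either --- it simply records these as the infeasible cases c) in Section III-A and copes by switching boundaries or adjusting $\alpha$ online. Second, ``taking $\beta$ sufficiently large on $(0,\bar u^2]$'' cannot dominate $\ell(0)=k(0,u(t,b))>0$ in general, because $D(t)=\beta(B(u(t)))$ and $B(u(t))$ may be arbitrarily close to $0$ while $u(t,b)$ is close to $\umax$ (the $L^2$ barrier gives no pointwise trace control, as you yourself observe); your fallback of strengthening the functional or assuming a trace bound is a reasonable instinct but is an extra hypothesis that neither you nor the paper establishes. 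As written, your argument proves the theorem cleanly for \eqref{eq:optprogstabilityleftright}, \eqref{eq:optprogsafetyright} and \eqref{eq:optprogsafetyleftright}, whereas for \eqref{eq:optprogstabilityleft}, \eqref{eq:optprogstabilityright} and \eqref{eq:optprogsafetyleft} it must be weakened to state-dependent feasibility conditions --- which is precisely the form the paper's own case analysis takes.
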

The proof is constructive and is illustrated below. We also highlight that the specific choice of the $\mathcal{K}$-class function $\alpha$ or the value of ${u}^*$ may, in some cases, prevent the existence of a stabilizing control. To address this issue, one can dynamically adjust $\alpha$ or switch between~\eqref{eq:optprogstabilityleft} and \eqref{eq:optprogstabilityright}. Similar considerations also apply to the invariance problem.

It is worth noticing that we can deal with compound problems too, where one boundary control aims to stabilize the system, and the other boundary control ensures that the invariance condition is preserved. This will be addressed in Theorem~\ref{thm:mixedpb} in Section~\ref{sec:mixedpb}.

\subsection{Stabilizing Controllers}
\label{sec:stabcontrol}

We first discuss conditions to ensure the existence of solutions to
\begin{equation}
    \min_{\omega_a(t)\in\mc C_a} \left\{ \omega_a(t)^2 ~:~ g(\omega_a(t),u(t,b)) \le -C(t) \right\}.
\end{equation}
This is a constrained minimization problem; therefore, the optimal control is either the trivial one, $\omega_a(t)=0$, provided that it lies in the interior of the region identified by the constraint, or it sits on the boundary of the region defined by the constraint, 
i.e. $\omega_a(t)$ is a root of the nonlinear equation 
\begin{equation}\label{bvl}
g(\omega_a(t),u(t,b)) = -C(t). 
\end{equation}
Let $p(s):= g(s,u(t,b))$, $\delta=\min\lbrace u^*, \hat{u}\rbrace$, $\gamma=\max\lbrace u^*, \hat{u}\rbrace$.\\ 
{\bf Case} $u^*\neq \hat{u}$: then $p'(s) < 0$ on $[0,\delta)\cup(\gamma, \umax]$ and $p'(s) > 0$ on $(\delta,\gamma)$. Since $\frac{2 u^* + \umax}{4} = \frac{u^* + \hat{u}}{2}$, equation~\eqref{bvl} has at most two roots on $\mc C_a$. In particular, we have three different cases:
 \begin{itemize} 
 \item[a)] If $p(0)\leq -C(t)$ the optimal control is $\omega_a^*(t)=0$
 \item[b)]    If $p(\delta)\leq -C(t)<p(0)$, the optimal control is given by the minimal root of equation \eqref{bvl}
 \item[c)] If $-C(t)<p(\delta)$, equation \eqref{bvl} has no solution on $\mc C_a$, 
and so~\eqref{eq:optprogstabilityleft} admits no solution
\end{itemize}
Figure~\ref{fig:curves} on the left illustrates the three cases a), b), c).\\
\begin{figure}
\centering
\includegraphics[width=0.495\linewidth]{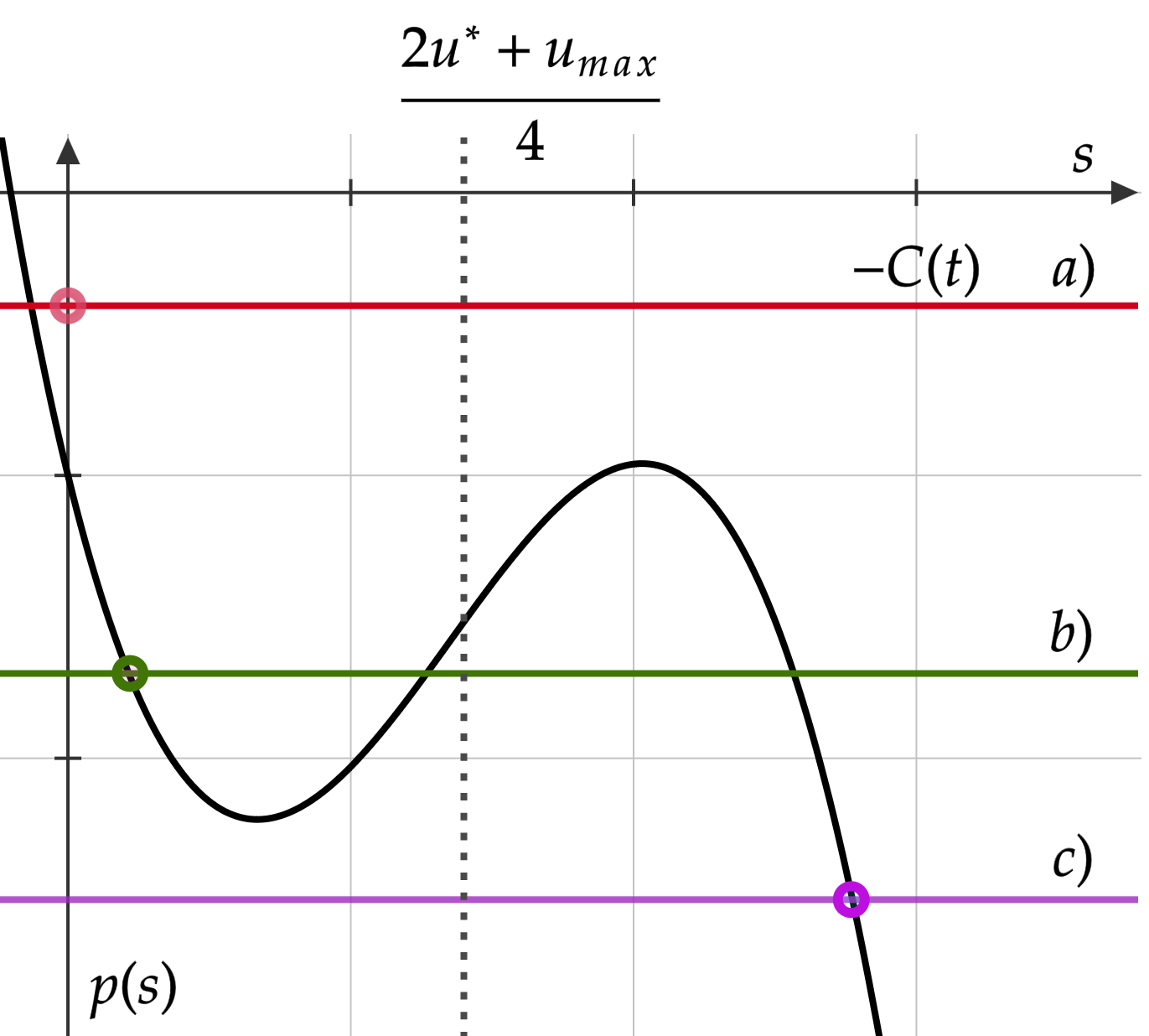}\hfill
\includegraphics[width=0.495\linewidth]{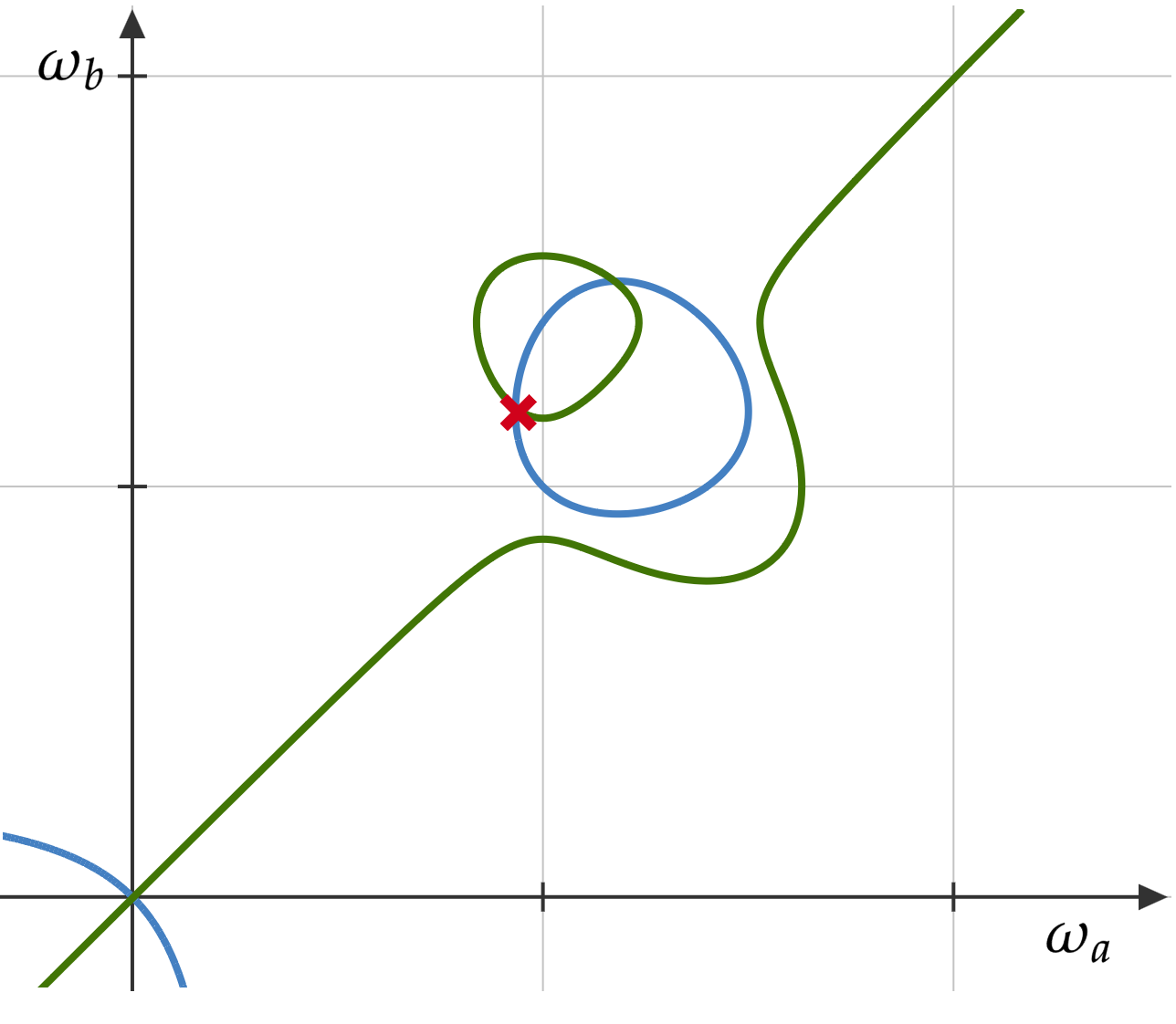}
\caption{On the left, a graphical representation of the roots of~\eqref{bvl} associated with~\eqref{eq:optprogstabilityleft}, case $u^*\neq \hat{u}$, for three different values of $-C(t)$; on the right, a possible configuration of the curves for problem~\eqref{eq:optprogstabilityleftright}, first (blue) and second (green) curves from~\eqref{eq:curv}.}
\label{fig:curves}
\end{figure}%
\noindent{\bf Case } $u^* = \hat{u}$: then $p'(s) < 0$ on $[0, \umax]\setminus\{\hat{u}\}$, $p'(\hat{u}) = 0$. 
Thus, equation~\eqref{bvl} has at most one root. In particular,
\begin{itemize}
    \item[a)] If $p(0)\leq -C(t)$ the optimal control is $\omega_a^*(t)=0$
    \item[b)] If $p(\frac{2 u^* + \umax}{4})\leq -C(t) < p(0)$, the optimal control is given by the unique root of equation \eqref{bvl} on $\left( 0, \frac{2 u^* + \umax}{4}\right]$
    \item[c)] If  $-C(t)<p(\frac{2 u^* + \umax}{4})$, equation \eqref{bvl} has no solution on $\mc C_a$, 
 and so~\eqref{eq:optprogstabilityleft} admits no solution
\end{itemize}
We now consider~\eqref{eq:optprogstabilityright}
\begin{equation*}
    \min_{\omega_b(t)\in\mc C_b} \left\{ \omega_b(t)^2 ~:~ g(u(t,a),\omega_b(t))\le -C(t)\right\}.
\end{equation*}
Let $q(z):= g(s,z)$, and notice that $q'(z) = \frac{1}{\hat{u}}(z-u^*)(z-\hat{u})$.\\
\noindent{\bf Case } $u^*\neq \hat{u}$: then $q'(z) > 0$ on $[0,\delta)\cup (\gamma,\umax]$ and $q'(z) < 0$ on $(\delta,\gamma)$. Therefore, 
\begin{itemize}
    \item[a)] If $-C(t)\ge q\left(\frac{2u^* + \umax}{4}\right)$, $\omega_b^*(t) = \frac{2u^* + \umax}{4}$ is the optimal solution to~\eqref{eq:optprogstabilityright}
    \item[b)] If $q(\gamma) \le -C(t) < q\left(\frac{2u^* + \umax}{4}\right)$, then $\omega_b^*(t)$ is the minimal root of $q(z) = -C(t)$ on $\left(\frac{2u^* + \umax}{4}, \umax\right]$
    \item[c)] if $-C(t) < q(\gamma)$, then there exists no solution to~\eqref{eq:optprogstabilityright}
\end{itemize}
{\bf Case} $u^* = \hat{u}$: then $q'(z) = \frac{1}{\hat{u}}(z-\hat{u})^2\ge 0$, and thus
\begin{itemize}
   \item[a)] If $-C(t)\ge q\left(\frac{2u^* + \umax}{4}\right)$, then $\omega_b^*(t) = \frac{2u^* + \umax}{4}$ is the optimal solution to~\eqref{eq:optprogstabilityright}
   \item[b)]  If $-C(t) < q\left(\frac{2u^* + \umax}{4}\right)$, then there exists no solution to~\eqref{eq:optprogstabilityright}
 \end{itemize}
We now look at~\eqref{eq:optprogstabilityleftright}:
\begin{equation*}
\min_{ \tiny\begin{aligned}&\omega_a(t)\in\mc C_a\\ &\omega_b(t)\in\mc C_b\end{aligned}} \left\{ \omega_a(t)^2+\omega_b(t)^2 ~:~ g(\omega_a(t),\omega_b(t))\le -C(t)\right\}.
\end{equation*}
By taking the Lagrangian, we can observe that 
critical points are  solutions of the system 
\begin{equation}\label{eq:curv}
    \begin{cases}
       sz\big( f'(s)+f'(z)\big)=u^* \big(sf'(z)+zf'(s)\big)\\ 
g(\omega_a(t),\omega_b(t)) +C(t) =0.
   \end{cases}
\end{equation}
An admissible optimal control, if any, is the point of minimal norm in the intersection of the curves defined by the equations in  \eqref{eq:curv} (see right figure in Fig.~\ref{fig:curves}). 
Due to the strong nonlinearity of these equations, we rely on numerical evidence to assert that an optimal control exists only if
$$C(t) = \alpha\Big(\frac{1}{2} \int_a^b (u(t,x) - u^*)^2 \di x \Big)\approx 0$$
meaning that the solution of the boundary value problem is nearly at the target state.

\subsection{Invariance-preserving Controllers}\label{sec:invariance-controller}
We first consider the existence of solutions to the invariance problem~\eqref{eq:optprogsafetyleft}, which reads as
\begin{equation*}
    \min_{\omega_a(t)\in\mc I_a} \left\{ \omega_a(t)^2 ~:~ k(\omega_a(t),u(t,b)) \le D(t) \right\}
\end{equation*}
The optimal solution is either $\omega_a^*(t)=0$, provided that it satisfies the constraint, or it lies on the boundary of the region defined by the constraint, i.e.,
\begin{equation}\label{eq:exoptinvariance}
k(\omega_a^*(t),u(t,b)) = D(t).
\end{equation}
Let $\ell(s) := k(s, u(t,b))$. It is easy to check that $\ell'(s) > 0$ on $\mc I_a$. So, if $\ell(0)\le D(t)$, then $\omega_a^*(t)=0$ is the optimal solution to~\eqref{eq:optprogsafetyleft} and if $D(t) < \ell(0)$, then there exists no solution to~\eqref{eq:optprogsafetyleft}.

Let us now consider problem~\eqref{eq:optprogsafetyright}: 
\begin{equation}
    \min_{\omega_b(t)\in\mc I_b} \left\{ \omega_b(t)^2 ~:~ k(u(t,a),\omega_b(t))\le D(t)\right\}.
\end{equation}
In this case, the optimal solution is $\omega_b^*(t)=\frac{\umax}{4}$, provided that it satisfies the constraint, or it lies on the boundary of the region defined by the constraint
\begin{equation}\label{eq:exoptinvarianceb}
k(u(t,a),\omega_b^*(t)) = D(t).
\end{equation}
Let $\rho(z) := k(u(t,a),z)$. Recalling that $\hat u$ is the unique point of maximum of the flux $f$ in \eqref{eq:flux}, we can then check that $\rho'(z) < 0$ on $[0,\hat{u})$ and $\rho'(z) > 0$ on $(\hat{u},\umax]$. Therefore,~\eqref{eq:exoptinvarianceb} has at most two roots on $\mc I_b$, and we conclude that:
if $D(t) \ge \rho\left(\frac{\umax}{4}\right)$, then $\omega_b^*(t)=\frac{\umax}{4}$ is the optimal solution to~\eqref{eq:optprogsafetyright};
if $\rho\left(\hat{u}\right) \le D(t) < \rho\left(\frac{\umax}{4}\right) $, then the optimal solution $\omega_b^*(t)$ to~\eqref{eq:optprogsafetyright} is given by the solution to~\eqref{eq:exoptinvarianceb} of minimal norm on $\mc I_b$, and if $D(t) < \rho\left(\hat{u}\right)$, then there exists no solution to~\eqref{eq:optprogsafetyright}.

We skip here the details of problem~\eqref{eq:optprogsafetyleftright}, which can be treated similarly to~\eqref{eq:optprogstabilityleftright}.

\begin{remark}
   The optimization problems \eqref{eq:optprogstabilityleftright} and \eqref{eq:optprogsafetyleftright} involving control at both boundaries might not admit a solution. The counterintuitive result is that our formulation lends itself to the optimization of single boundary controls insofar as it results in wider feasibility intervals. In the next section, we will propose a modified formulation to achieve stability and invariance using both boundary controls.
\end{remark}

\section{Existence of Stabilizing and Invariance-preserving Controllers}\label{sec:mixedpb}

We now consider the compound problem that aims to ensure stability from one boundary point and invariance from the other one. With the insights gained in the previous section when studying the feasibility of \eqref{eq:optprogstabilityleftright} and \eqref{eq:optprogsafetyleftright}, we reformulate~\eqref{eq:optprogstabilityleft}-\eqref{eq:optprogsafetyright} in the following relaxed version:
\begin{equation}
    \begin{aligned}
        \min_{\omega_a(t)\in\mc C_a} \big\{ \omega_a(t)^2 ~:~
         & g(\omega_a(t),z_1) \le - C(t),\\[-0.5ex]
         & k(\omega_a(t),z_2)\le D(t),\,z_1,z_2 \in \mc I_b \big\}
    \end{aligned}
     \label{eq:mixoptimpbA}
\end{equation}
and 
\begin{equation}\label{eq:mixoptimpbB}
    \begin{aligned}
        \min_{\omega_b(t) \in \mc I_b} \big\{ \omega_b(t)^2 ~:~ & g(s_1,\omega_b(t)) \le - C(t),\\[-0.5ex]
        & k(s_2,\omega_b(t))\le D(t),\,s_1,s_2 \in \mc C_a\}.
    \end{aligned}
\end{equation}
Notice that we solve independently the two optimization problems yet ensuring that each satisfies both constraints within the admissibility class of the other.
\begin{theorem}\label{thm:mixedpb}
There exist values of the parameters $u^*$, $\bar{u}\le \umax$ and class $\mc K$ functions $\alpha, \beta$ such that the optimization problem~\eqref{eq:mixoptimpbA}-\eqref{eq:mixoptimpbB} is feasible.
\end{theorem}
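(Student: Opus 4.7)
The plan is to prove feasibility by exhibiting one explicit admissible triple in each of the two decoupled problems, and then to choose the class $\mc K$ functions small enough that the time-varying slacks $C(t)$ and $D(t)$ do not destroy that feasibility along the closed-loop trajectory.

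For~\eqref{eq:mixoptimpbA} I would try the candidate $(\omega_a, z_1, z_2) = (0, \hat u, \hat u)$, which clearly lies in $\mc C_a \times \mc I_b \times \mc I_b$. Using $f(0)=0$ and the explicit primitive $F$ of the flux in~\eqref{eq:flux}, a short calculation gives
\begin{equation*}
g(0,\hat u) = \frac{u^* \umax}{4} - \frac{\umax^2}{24},\qquad k(0,\hat u) = -\frac{\umax^2}{24},
\end{equation*}
both strictly negative as soon as $u^* < \umax/6$; I would fix $u^*$ in this range. The invariance inequality $k(0,\hat u)\le D(t)$ is then automatic because $D(t)\ge 0$, while the stability inequality $g(0,\hat u)\le -C(t)$ reduces to a uniform upper bound on $C(t)$.

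For~\eqref{eq:mixoptimpbB} I would try $(\omega_b, s_1, s_2) = (\hat u, u^*, 0)$; the requirement $u^*\le\hat u$ needed for $u^*\in\mc C_a$ is already implied by $u^*<\umax/6$. A direct check yields $g(u^*,\hat u) = F(\hat u)-F(u^*)-(\hat u - u^*)\umax/4<0$, because $\hat u$ is the strict maximizer of $f$ and so $f(z)<\umax/4$ for $z\in[u^*,\hat u)$; and $k(0,\hat u) = -\umax^2/24<0$ as before. The same analysis therefore carries through.

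To close the argument I would pick $\alpha\in\mc K$ small enough that $\alpha(V(u(0)))$ is strictly less than both positive quantities $-g(0,\hat u)$ and $-g(u^*,\hat u)$; $\beta\in\mc K$ can be taken arbitrary and $\bar u$ is chosen so that $u(0)\in\mc S$. The main obstacle is the potentially circular dependence between feasibility and the time evolution of $V$: a priori $C(t)=\alpha(V(u(t)))$ might grow and eventually violate $g(0,\hat u)\le -C(t)$. This is resolved by observing that whenever the Lyapunov constraint is enforced the uncontrolled jump terms in~\eqref{eq:Vdot} are nonpositive, so $\dot V\le -C(t)\le 0$ and hence $V(u(t))\le V(u(0))$ for all $t\ge 0$, giving the uniform bound we need. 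Everything else reduces to elementary algebra at the proposed candidates.
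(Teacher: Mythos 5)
Your construction of explicit feasible points is correct and follows a genuinely different route from the paper: the paper's proof is an exhaustive case analysis (on the monotonicity of $s\mapsto g(s,z)$, $s\mapsto k(s,z)$, $z\mapsto g(s,z)$, $z\mapsto k(s,z)$, and on the relative position of $u^*$ and $\hat u$) that describes how to populate the admissible sets $\mc U$ and $\mc W$ and identifies in which cases they are nonempty, whereas you simply exhibit the witnesses $(\omega_a,z_1,z_2)=(0,\hat u,\hat u)$ and $(\omega_b,s_1,s_2)=(\hat u,u^*,0)$ and quantify an explicit parameter range. Your algebra checks out: $g(0,\hat u)=\frac{u^*\umax}{4}-\frac{\umax^2}{24}$, $k(0,\hat u)=-\frac{\umax^2}{24}$, $g(u^*,\hat u)<0$ by the bound $F(\hat u)-F(u^*)<(\hat u-u^*)f(\hat u)$, and all candidate points lie in $\mc C_a$ or $\mc I_b$ as required (membership $u^*\in\mc C_a$ only needs $u^*\le\hat u$). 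For the bare existence claim this is more explicit than the paper's argument, which never writes down concrete parameter values.

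The gap is in your closing step, where you claim that $\dot V\le -C(t)\le 0$ ``whenever the Lyapunov constraint is enforced'' and conclude $V(u(t))\le V(u(0))$. In the relaxed problems \eqref{eq:mixoptimpbA}--\eqref{eq:mixoptimpbB} the quantities $z_1,z_2\in\mc I_b$ and $s_1,s_2\in\mc C_a$ are free slack variables, not the actual boundary traces: satisfying $g(\omega_a(t),z_1)\le -C(t)$ for \emph{some} $z_1\in\mc I_b$ does not imply $g(u(t,a),u(t,b))\le-C(t)$, which is what the decrease formula \eqref{eq:Vdot} requires (the paper's simulations indeed show stability being sacrificed under this compound formulation, and there is the further issue that the prescribed boundary values are only attained in the weak sense of Bardos--LeRoux--N\'ed\'elec). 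Hence $V$ may grow, and the uniform bound on $C(t)$ that your feasibility argument needs is not secured. The fix is simple and avoids closed-loop reasoning altogether: admissible solutions satisfy $0\le u(t,x)\le\umax$, so $V(u(t))\le \bar V:=\frac{b-a}{2}\max\{u^*,\umax-u^*\}^2$ for all $t$; it then suffices to choose $\alpha$ with $\alpha(\bar V)\le \frac{\umax^2}{24}-\frac{u^*\umax}{4}$ (any $\beta$, and $\bar u$ such that $B(u(t))\ge 0$ so that $D(t)\ge 0$, as the paper assumes), after which both problems are feasible at every $t$ via your witnesses. With that replacement your proof is complete.
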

\begin{proof}
We first focus on~\eqref{eq:mixoptimpbA}. It is clear that $\omega_a^*(t) = 0$ is the optimal solution provided that the constraints are satisfied. If not, then $\omega_a^*(t)$ lies at the boundary of the constraint set, which is given by either
\begin{equation}\label{V1}
\begin{cases}
  g(\omega_a^*(t),z_1) = -C(t), & z_1 \in \mc I_b,
\\[0.3ex]
 k(\omega_a^*(t),z_2)\le D(t), & z_2\in \mc I_b,
\end{cases}
\end{equation}
or 
\begin{equation}\label{V2}
\begin{cases}
  g(\omega_a^*(t),z_1) \le -C(t), & z_1 \in \mc I_b,
\\[0.3ex]
 k(\omega_a^*(t),z_2) = D(t), & z_2\in \mc I_b.
\end{cases}
\end{equation}

Denote by $\mc U$ the set of all admissible solutions to both~\eqref{V1} and~\eqref{V2} on $\mc C_a$. 
We now discuss each case separately. Reasoning as in \ref{sec:stabcontrol} with the monotonicity of $p(s):= g(s,z)$, $z\in \mc I_b$, we characterize the solutions to~\eqref{V1} as follows.\\
{\bf Case} $u^*\neq \hat{u}$: then $p'(s) < 0$ on  $[0,\delta)\cup(\gamma, \umax]$ and $p'(s) > 0$ on $(\delta,\gamma)$. Thus,
\begin{itemize}
    \item[a)] If
    $g(0,z_1)\leq -C(t)$ for some $z_1\in \mc I_b$, 
    then $\omega_a^*(t)=0$ is an admissible solution of~\eqref{V1} provided that $k(\omega_a^*(t),z_2)\le D(t)$ for some $z_2\in \mc I_b$. If so, we store it in $\mc U$
    \item[b)] If $g(\delta,z_1)\leq -C(t)< g(0,z_1)$ for some $z_1\in \mc I_b$, then among all the roots $s^*$ to $p(s) = -C(t)$ select the one of minimal norm that also satisfies $k(s^*,z)\le D(t)$ for some $z\in \mc I_b$, if any, and store it in $\mc U$
    \item[c)] If $-C(t)<g(\delta,z)$ for all $z\in \mc I_b$, then~\eqref{V1} has no solution and we proceed searching for a solution to~\eqref{V2}
\end{itemize}
\begin{figure*}
    \centering
    \subfloat[$t=0.3$ s]{\label{subfig:snap1}\includegraphics[trim={0 0 2cm 0},clip,width=0.16\linewidth]{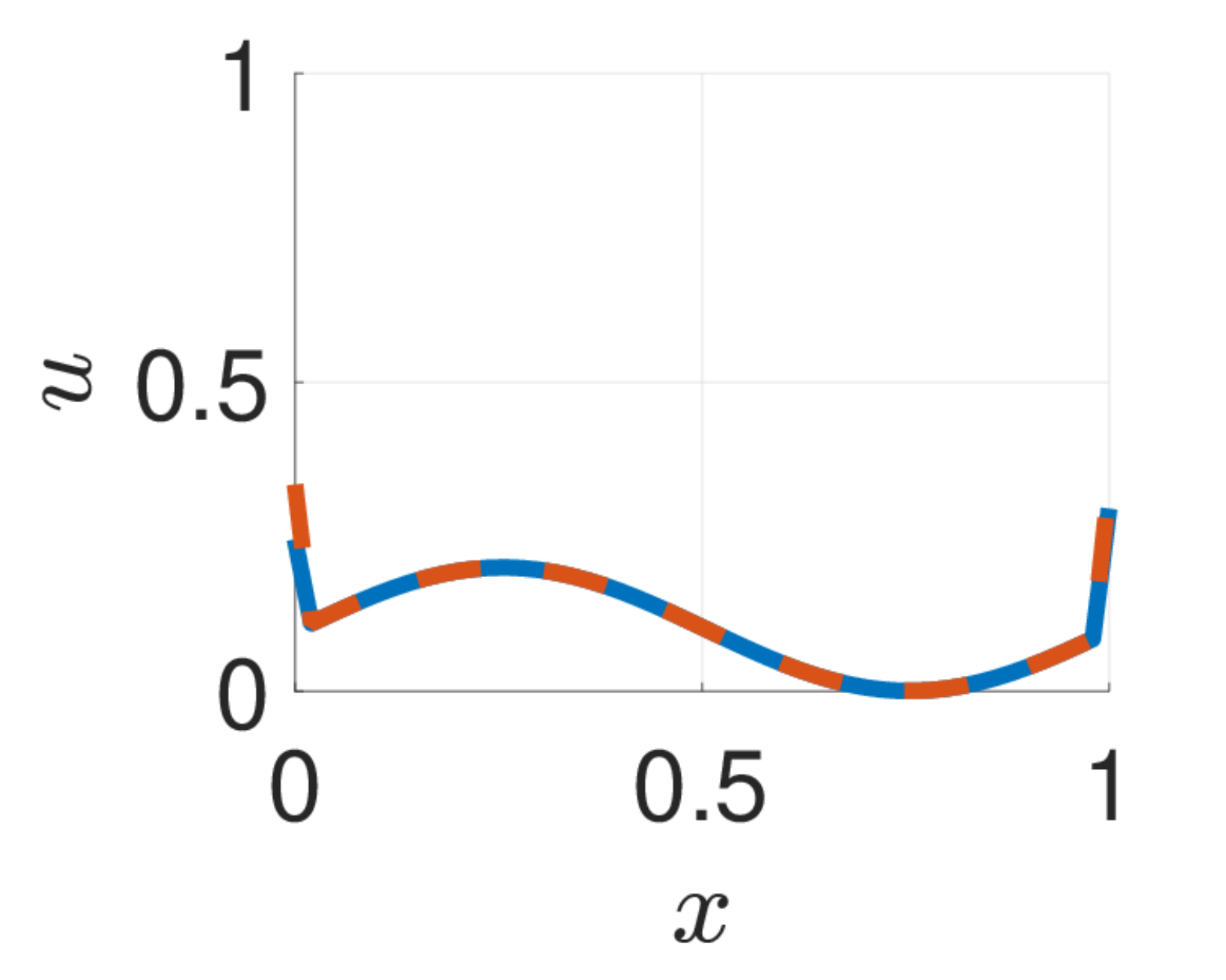}}\hfill
    \subfloat[$t=1.5$ s]{\label{subfig:snap2}\includegraphics[trim={0 0 2cm 0},clip,width=0.16\linewidth]{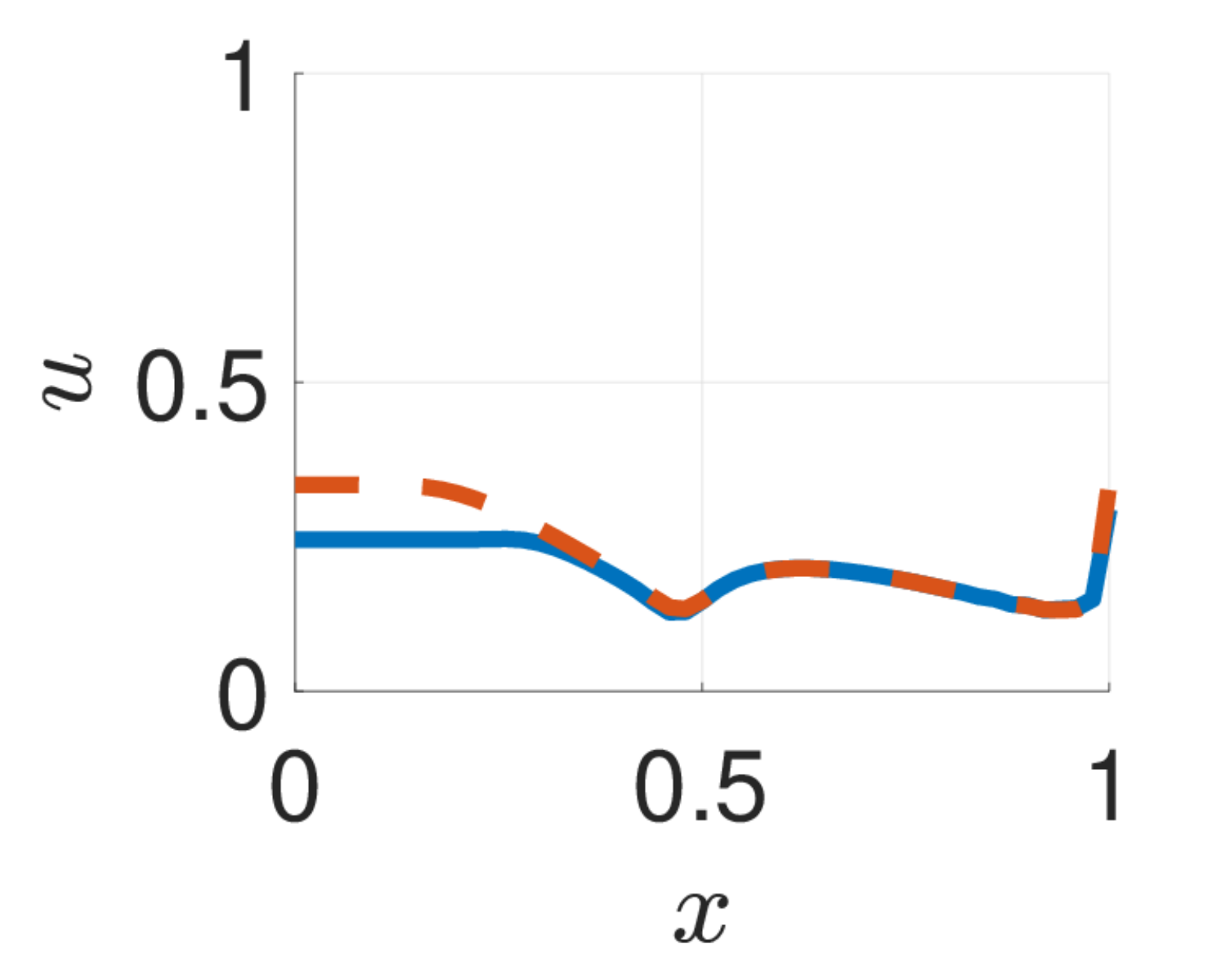}}\hfill
    \subfloat[$t=3.0$ s]{\label{subfig:snap3}\includegraphics[trim={0 0 2cm 0},clip,width=0.16\linewidth]{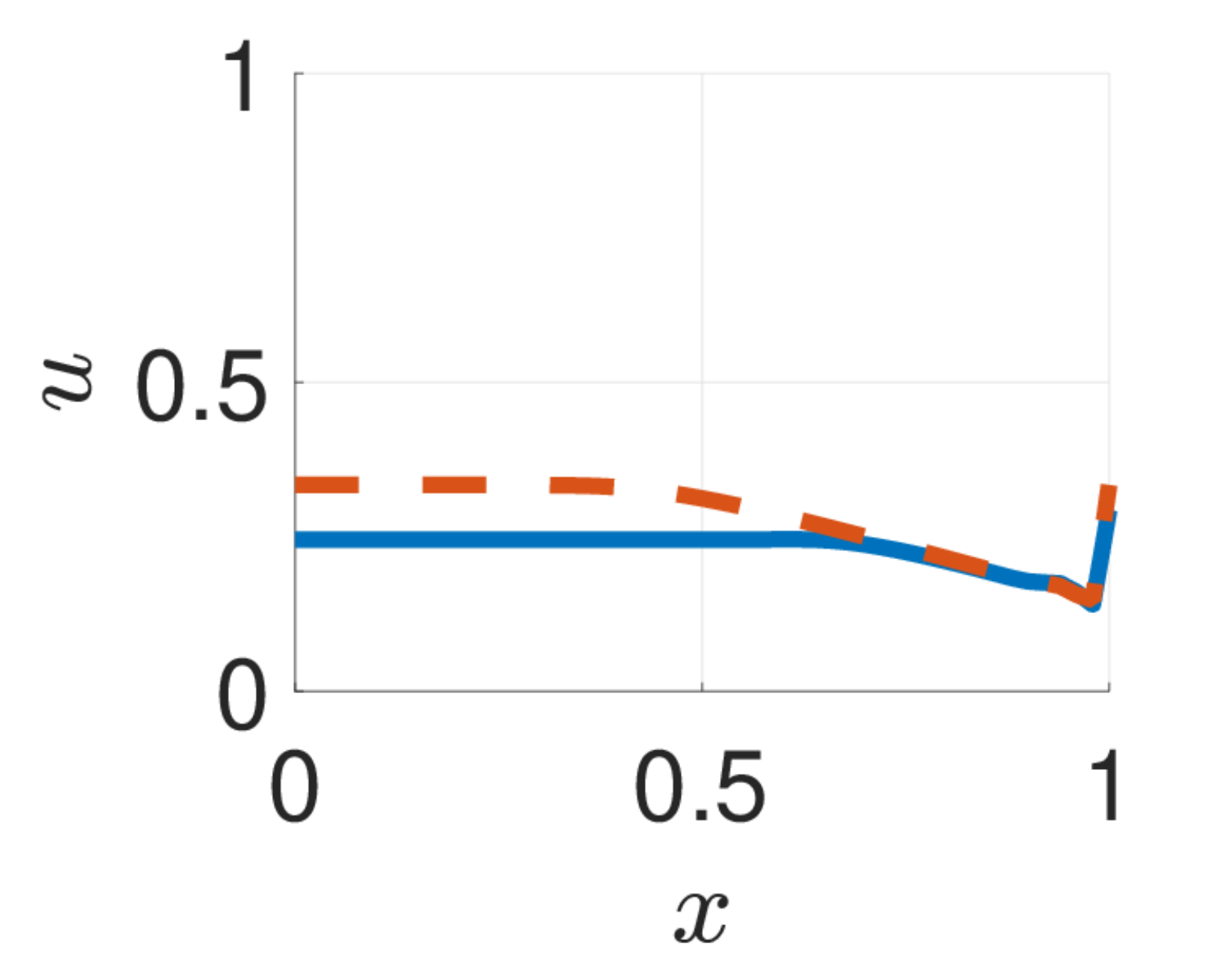}}\hfill
    \subfloat[$t=4.5$ s]{\label{subfig:snap4}\includegraphics[trim={0 0 2cm 0},clip,width=0.16\linewidth]{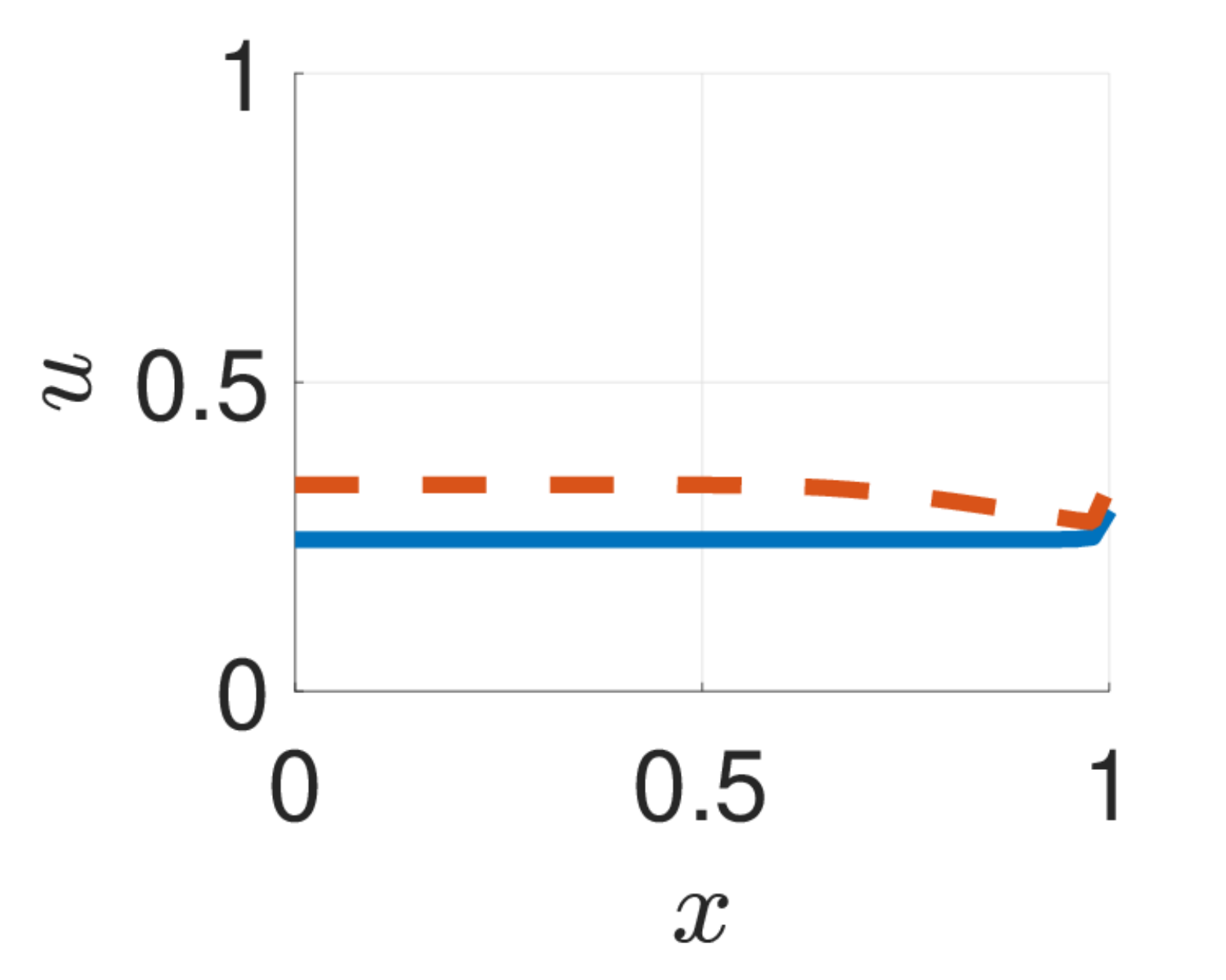}}\hfill
    \subfloat[$t=15$ s]{\label{subfig:snap5}\includegraphics[trim={0 0 2cm 0},clip,width=0.16\linewidth]{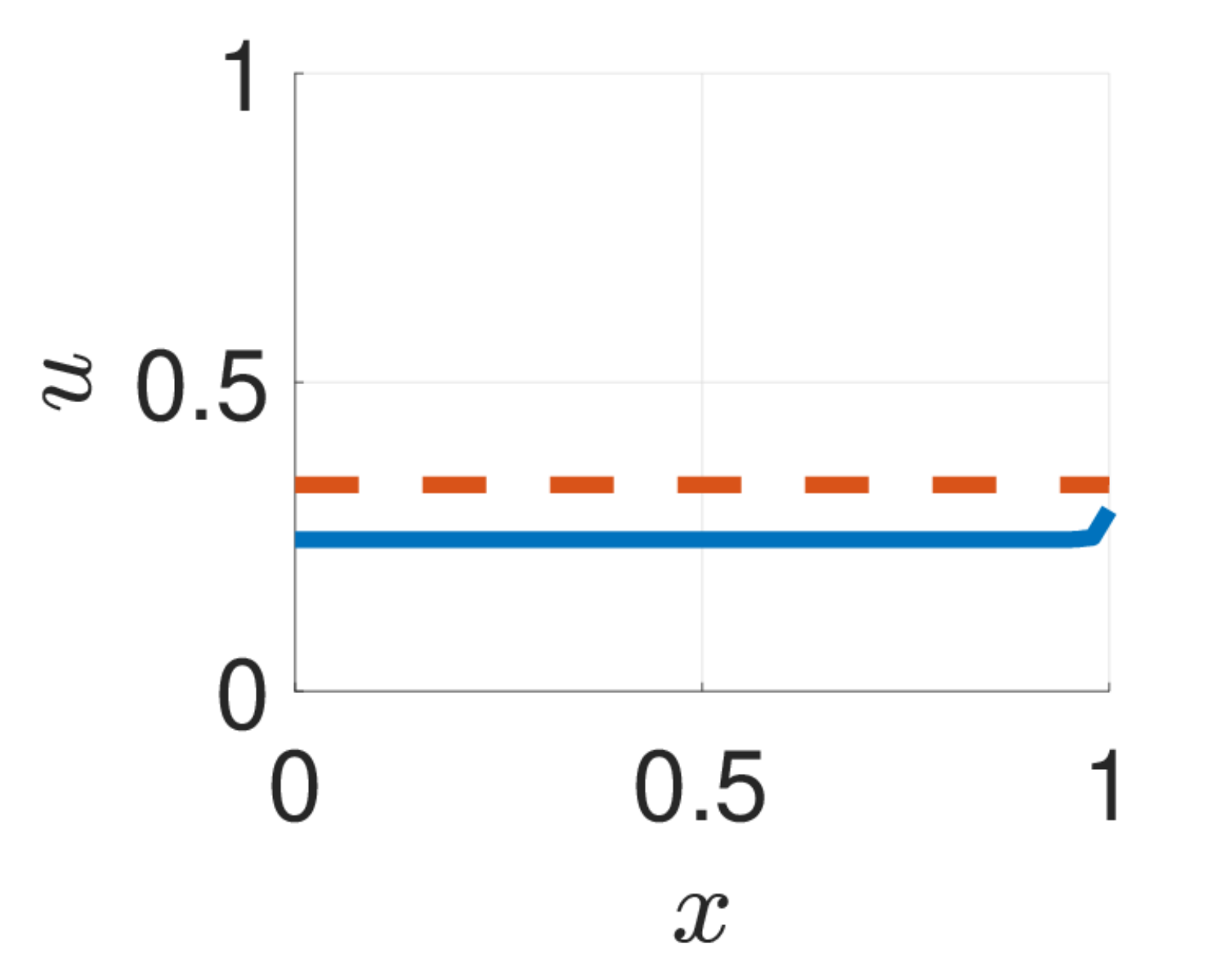}}\hfill
    \subfloat[$t=30$ s]{\label{subfig:snap6}\includegraphics[trim={0 0 2cm 0},clip,width=0.16\linewidth]{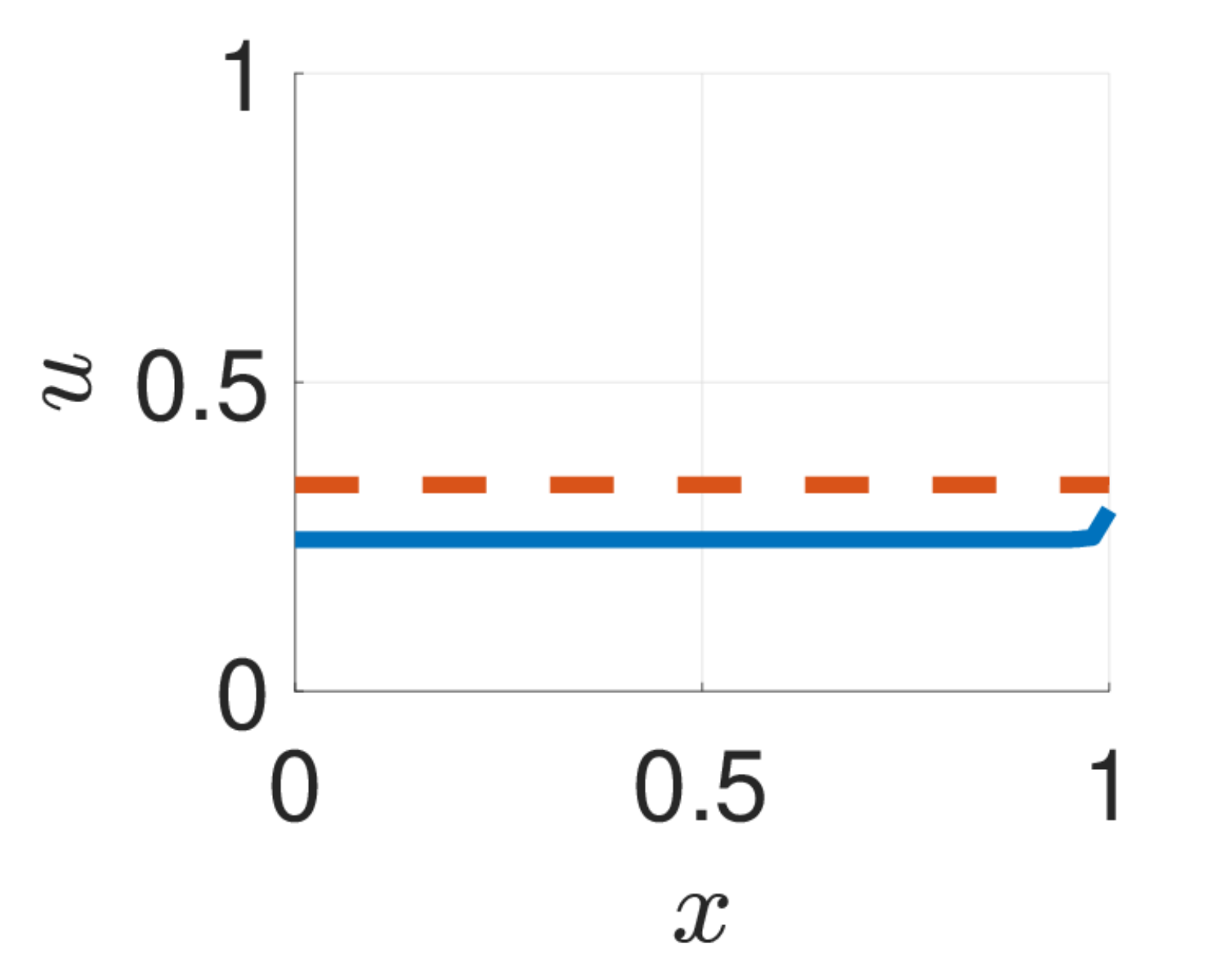}}\\
    \subfloat[Time evolution of {$V(u(t))$ in green and $B(u(t))$ in red}.]{\label{subfig:lyap_barr}\includegraphics[trim={3.5cm 0 3.5cm 0},clip,width=0.495\linewidth]{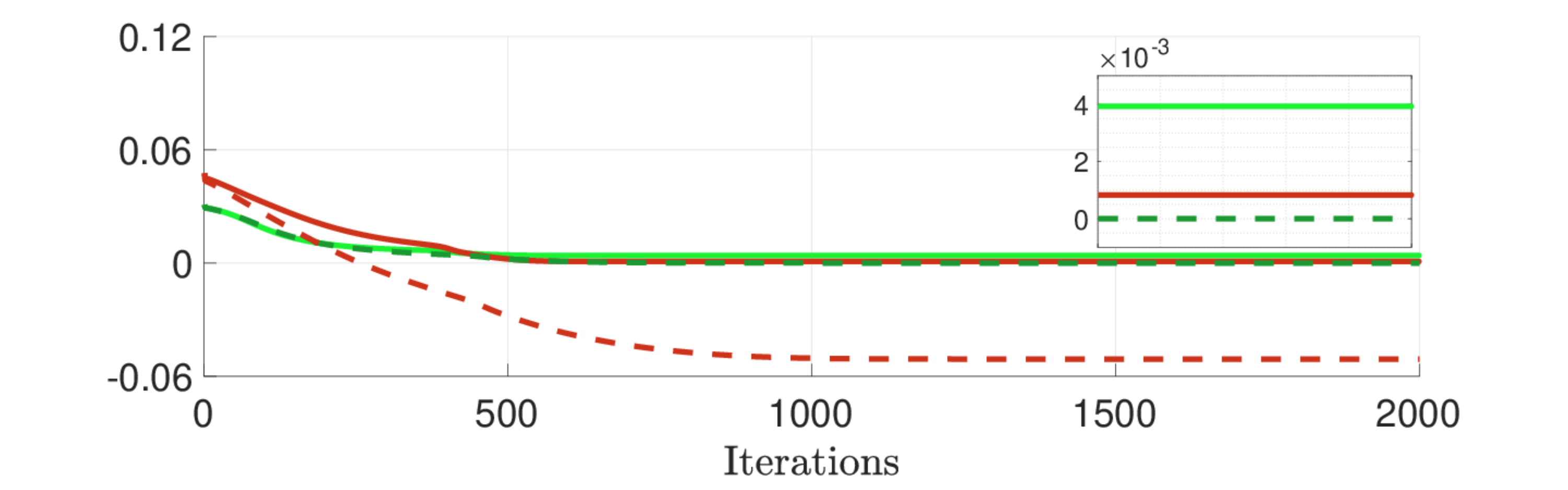}}\hfill
    \subfloat[Time evolution of {$\omega_a(t)$ in blue and $\omega_b(t)$ in orange.}]{\label{subfig:inputs}\includegraphics[trim={3.5cm 0 3.5cm 0},clip,width=0.495\linewidth]{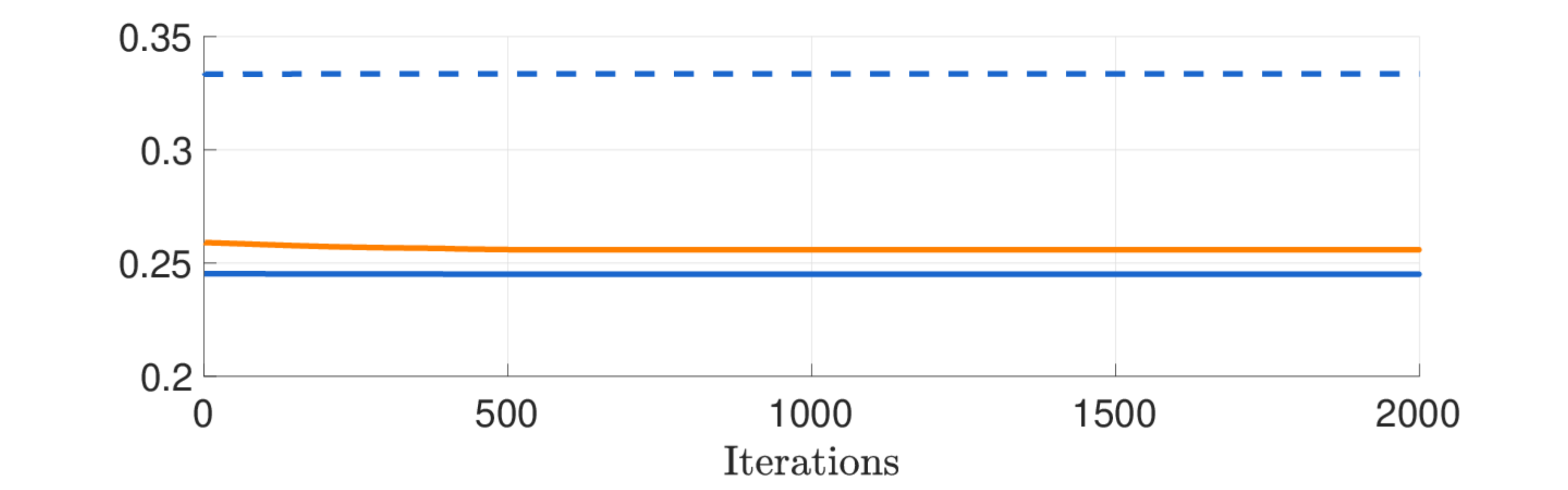}}
    \caption{Figures~\ref{subfig:snap1}--\ref{subfig:snap6} show the time evolution of the state, starting from a sinusoidal initial condition and controlled to achieve only stability (red dashed line) and stability with invariance (blue solid line). In the former case, the left boundary is computed using \eqref{eq:optprogstabilityleft}, while the right boundary is set to $u^*=1/3$. In the latter case, the boundary controls are evaluated using \eqref{eq:optprogstabilityleft}-\eqref{eq:optprogsafetyright}. Fig.~\ref{subfig:lyap_barr} shows that invariance is violated ($B$ in red dashed line becomes negative) in the case of stability-only control, whereas $B$ remains positive when invariance is controlled.}
    \label{fig:simulations}
\end{figure*}
{\bf Case} $u^* = \hat{u}$: $p'(s) < 0$ on $[0, \umax]\setminus\{\hat{u}\}$, $p'(\hat{u}) = 0$. 
Thus,
\begin{itemize}
    \item[a)]  If {$g(0,z_1)\leq -C(t)$ for some $z_1\in \mc I_b$,} then $\omega_a^*(t)=0$ is an admissible solution to~\eqref{V1} provided that $k(\omega_a^*(t),z_2)\le D(t)$ for some $z_2\in \mc I_b$, and we store it in $\mc U$
    \item[b)] If $g(\frac{2 u^* + \umax}{4},z_1)\leq -C(t)<g(0,z_1)$ for some $z_1\in \mc I_b$, an admissible solution to~\eqref{V1} is given by the unique root $s^*$ of $p(s) = -C(t)$ on $\left(0, \frac{2 u^* + \umax}{4}\right]$ provided that $k(s^*,z_2)\le D(t)$ for some $z_2\in \mc I_b$, and we store it in $\mc U$
    \item[c)] If $-C(t)<g(\frac{2 u^* + \umax}{4},z)$ for all $z\in \mc I_b$, equation  $p(s) = -C(t)$ has no root on $\mc C_a$, and so~\eqref{V1} admits no solution and we proceed searching for a solution to~\eqref{V2}
\end{itemize}
We now consider the case~\eqref{V2}. Arguing as in \ref{sec:invariance-controller} with the monotonicity of $\ell(s) := k(s, z)$, $z\in \mc I_b$, we notice that $\ell'(s) > 0$ on $\left[0,\umax/2\right)$ and $\ell'(s)< 0$ on $\left(\umax/2, \umax\right]$. 
Since $\frac{2 u^* + \umax}{4} < \frac{\umax}{2}$, we get that
\begin{itemize}
\item[a)] If $D(t) \ge k(0,z_1)$ for some $z_1\in \mc I_b$, then $\omega_a^*(t)=0$ is an admissible solution of~\eqref{V2} provided that $g(\omega_a^*(t),z_2)\le -C(t)$ for some $z_2\in \mc I_b$, and we store it in $\mc U$
\item[b)] If $D(t) <  k(0,z_1)$ for all $z_1\in \mc I_b$,~\eqref{V2} has no solution
\end{itemize}
At this stage, the optimal solution to problem~\eqref{eq:mixoptimpbA} is given by the element $s^*$ in $\mc U$ of minimal norm. 

In a similar manner, we can solve the problem~\eqref{eq:mixoptimpbB}. It is clear that $\omega_b^*(t) = \frac{\umax}{4}$ is the optimal solution provided that the constraints are satisfied. If not, then $\omega_b^*(t)$ lies at the boundary of the constraint set, which is given by either 
\begin{equation}\label{V3}
\begin{cases}
  g(s_1,\omega_b^*(t)) = -C(t), & s_1 \in \mc C_a
\\[0.3ex]
 k(s_2,\omega_b^*(t))\le D(t), & s_2\in \mc C_a
\end{cases}
\end{equation}
or
\begin{equation}\label{V4}
\begin{cases}
  g(s_1,\omega_b^*(t)) \le -C(t), & s_1 \in \mc C_a
\\[0.3ex]
 k(s_2,\omega_b^*(t)) = D(t), & s_2\in \mc C_a.
\end{cases}
\end{equation}
Denote by $\mc W$ the set of all admissible solutions to both~\eqref{V3} and~\eqref{V4} on $\mc I_b$. 
In the case~\eqref{V3}, arguing as in Section~\ref{sec:stabcontrol} with the monotonicity of $q(z):= g(s,z)$, $s\in\mc C_a$, we have the following.\\
{\bf Case} $u^*\neq \hat{u}$: then $q'(z) > 0$ on $[0,\delta)\cup (\gamma,\umax]$ and $q'(z) < 0$ on $(\delta,\gamma)$. Thus, if $u^*\le \umax/4$,
\begin{itemize}
    \item[a)] If $g(s_1,\umax/4)\leq -C(t)$ for some $s_1\in \mc C_a$, then $\omega_b^*(t)=\umax/4$ is an admissible solution to~\eqref{V3} provided that $k(s_2,\omega_b^*(t))\le D(t)$ for some $s_2\in \mc C_a$. If so, we store it in $\mc W$
    \item[b)] If $g(s_1,\hat{u})\leq -C(t)< g(s_1,\umax/4)$ for some $s_1\in \mc C_a$, then among all the roots $z^*$ to $q(z) = -C(t)$ select the one of minimal norm that also satisfies $k(s,z^*)\le D(t)$ for some $s\in \mc C_a$, if any, and store it in $\mc W$
    \item[c)] If $-C(t)<g(s,\hat{u})$ for all $s\in \mc C_a$, then~\eqref{V3} does not admit any solution on $\mc I_b$ and we proceed searching for a solution to the case~\eqref{V4}
\end{itemize}
If $\umax/4 < u^* < \hat{u}$ or $\hat{u} < u^*$, then
\begin{itemize}[itemindent=-0.5em]
    \item[a)] If $g(s_1,\umax/4)\leq -C(t)$ for some $s_1\in \mc C_a$, $\omega_b^*(t)=\umax/4$ is an admissible solution of~\eqref{V3} provided that $k(s_2,\omega_b^*(t))\le D(t)$ for some $s_2\in \mc C_a$. If so, we store it in $\mc W$
    \item[b)] If $g(s_1,\gamma)< g(s_1,\umax/4)$ and $g(s_1,\gamma)\leq -C(t)< g(s_1,\umax/4)$ for some $s_1\in \mc C_a$, then among the two roots $z^*$ to $q(z) = -C(t)$ select the one of minimal norm that also satisfies $k(s_2,z^*)\le D(t)$ for some $s_2\in \mc C_a$, if any, and store it in $\mc W$
    \item[c)] If $-C(t)< \min\{g(s,\gamma),g(s,\umax/4)\}$ for all $s\in \mc C_a$, then~\eqref{V3} does not admit any solution on $\mc I_b$ and we proceed searching for a solution to the case~\eqref{V4}
\end{itemize}

{\bf Case} $u^* = \hat{u}$: since $q'(z) = \frac{1}{\hat{u}}(z-\hat{u})^2\ge 0$, thus
\begin{itemize}
    \item[a)]  If $g(s_1,\umax/4)\leq -C(t)$ for some $s_1\in \mc C_a$, then $\omega_b^*(t)=\umax/4$ is an admissible solution to~\eqref{V3} provided that $k(s_2,\omega_b^*(t))\le D(t)$ for some $s_2\in \mc C_a$. If so, we store it in $\mc W$
    \item[b)] If $-C(t)< g(s,\umax/4)$ for all $s\in \mc C_a$, then~\eqref{V3} does not admit any solution on $\mc I_b$ and we proceed searching for a solution to the case~\eqref{V4}
\end{itemize}
We now focus on the case~\eqref{V4}. Reasoning as in Section~\ref{sec:invariance-controller} with the monotonicity of $\rho(z):= k(s,z)$, $s\in \mc C_a$, we have the following.
\begin{itemize}
    \item[a)] If $k(s_2,\umax/4)\leq D(t)$ for some $s_2\in \mc C_a$, then $\omega_b^*(t)=\umax/4$ is an admissible solution of~\eqref{V4} provided that $g(s_1,\omega_b^*(t))\le D(t)$ for some $s_1\in \mc C_a$. If so, we store it in $\mc W$
    \item[b)] If $k(s_2,\hat{u})\leq D(t)< k(s_2,\umax/4)$ for some $s_2\in \mc C_a$, then among the two roots $z^*$ to $\rho(z) = D(t)$ select the one of minimal norm that also satisfies $g(s_1,z^*)\le -C(t)$ for some $s_1\in \mc C_a$, if any, and store it in $\mc W$
    \item[c)] If $D(t)< k(s,\hat{u})$ for all $s\in \mc C_a$, then~\eqref{V4} does not admit any solution on $\mc I_b$
\end{itemize}
At this point, the optimal solution to problem~\eqref{eq:mixoptimpbB} is given by the element $z^*$ in $\mc W$ of minimal norm.

Finally, the solution to the compound problem is given by the pair $(\omega^*_a(t),\omega^*_b(t))$ where $\omega^*_a(t)$ solves~\eqref{eq:mixoptimpbA} and $\omega^*_b(t)$ is the solution to~\eqref{eq:mixoptimpbB}, if both $\mc U$ and $\mc W$ are nonempty.
\end{proof}

\section{Simulations}

In this section, we illustrate the behavior of system~\eqref{CL}-\eqref{BVs} and compare the solution to the stability-only problem (8) from Section III with the solution to the stabilizing and invariance-preserving problem \eqref{eq:mixoptimpbA}-\eqref{eq:mixoptimpbB} from Section IV. In this way, in the first instance, we control the left boundary to achieve stability, while in the second case, we also control the right boundary in order to achieve invariance.
The parameters used to simulate the system are as follows: $a=0$, $b=1$, $\umax=1$, $u^*=1/3$ in \eqref{eq:clf}, and $\bar u = 1/4$ in \eqref{eq:bcbf}. We simulate the system over the time interval $[0,30]$ s.

The snapshots of the two simulations are shown in Fig.~\ref{fig:simulations}. Figures~\ref{subfig:snap1}--\ref{subfig:snap6} report the time evolution of the state of the system \eqref{CL}, where 1 iteration corresponds to $0.015$ s. The red dashed line depicts the case where only $\omega_a$ is controlled, using \eqref{eq:optprogstabilityleft}, while the blue solid line represents the system state behavior under the stability and invariance-preserving boundary controllers achieved via $\omega_a$ and $\omega_b$ computed by \eqref{eq:mixoptimpbA} and \eqref{eq:mixoptimpbB}, respectively. Starting from a sinusoidal initial condition, in the first case, the stabilizing controller leads the state to settle at $u^*=1/3$, while in the second case, the invariance constraint that the $L^2$-norm of the state has to be less than $\bar u=1/4$ is achieved. The invariance is obtained at the expense of stability, as can be quantitatively seen in Fig.~\ref{subfig:lyap_barr}. Lower values of the Lyapunov function (green) correspond to the state $u$ being closer to the desired value $u^*$ in the $L^2$-norm sense, and positive values of the BCBFal (red) mean that the norm of the state is less than $\bar u$ (see, in particular, the zoomed popout axes in Fig.~\ref{subfig:lyap_barr}). The regularity of the controllers to achieve the desired behaviors can be appreciated in Fig.~\ref{subfig:inputs}, corroborating the choice made in Section~\ref{sec:mixedpb} of solving \eqref{eq:mixoptimpbA} and \eqref{eq:mixoptimpbB} in place of \eqref{eq:optprogstabilityleft}-\eqref{eq:optprogsafetyright}.

\section{Conclusions}

In this letter, we introduced boundary control barrier functionals for PDEs, derived constraints for state invariance, and formulated a convex optimization-based control policy, providing sufficient conditions for the existence of optimal boundary controllers. Although our analysis focused on traffic flow dynamics, the proposed approach is more general and can be extended to address stabilization and invariance in a wider class of PDEs. In particular, future work will explore its application to more general conservation laws as well as the regularity of the resulting controller. 

\bibliographystyle{bib/IEEEtran}
\bibliography{bib/IEEEabrv,bib/references}

\end{document}